\newcommand{\onto}{{\rightarrow}}
\newcommand{\df}{{\rm d}}
\def\R{\text{$\mathbb{R}$}}
\def\fis#1{\dot{#1}}
\def\lra{\longrightarrow}
\def\fis#1{\dot{#1}}
\def\lra{\longrightarrow}
\def\d1#1#2{\frac{d#1}{d#2}}
\def\p1#1#2{\frac{\partial #1}{\partial #2}}
\def\to{t_o}
\def\part{a=\to \leq t_1 \leq \ldots \leq t_{n-1} \leq t_{n}=b}
\def\N{\text{$\mathbb{N}$}}
\def\Z{\text{$\mathbb{Z}$}}
\renewcommand\O{{\rm O}}
\title[$t$-periodic light rays via Finsler geometry]{$t$-periodic light rays in conformally stationary spacetimes via Finsler geometry}
\author[L. Biliotti]{Leonardo Biliotti}
\author[M. A. Javaloyes]{Miguel \'Angel Javaloyes}
\address{Dipartimento di Matematica,
Universit\`a di Parma, Via G. Usberti, 53/A 43100,
Parma Italy}
\email{leonardo.biliotti@unipr.it}
\address{Departamento de Matem\'atica,
Instituto de Matem\'atica e Estat\'\i stica  Universidade de S\~ao
Paulo,  Caixa Postal 66281, CEP 05315--970, SP Brazil}
\thanks{The first author was partially supported by I.N.d.A.M. The authors wish to thank Prof. E. Caponio, P. Piccione, M. S\'anchez and D. Tausk for their valuable comments and their continuous support.}
\email{majava@ime.usp.br}
\thanks{2000 {\em Mathematics Subject Classification: Primary 53C22, 53C50, 53C60, 58B20} \\
\textbf{Key words:} Finsler metrics, geodesics, stazionary
Lorentzian manifolds, light ray. }
\begin{document}
\newtheorem{thm}{Theorem}[section]
\newtheorem{prop}[thm]{Proposition}
\newtheorem{lemma}[thm]{Lemma}
\newtheorem{cor}[thm]{Corollary}
\theoremstyle{definition}
\newtheorem{defini}[thm]{Definition}
\newtheorem{notation}[thm]{Notation}
\newtheorem{exe}[thm]{Example}
\newtheorem{conj}[thm]{Conjecture}
\newtheorem{prob}[thm]{Problem}
\theoremstyle{remark}
\newtheorem{rem}[thm]{Remark}
\begin{abstract}
In this paper we prove several multiplicity results of $t$-periodic light rays in conformally stationary spacetimes using the Fermat metric and the extensions of the classical theorems of Gromoll-Meyer and Bangert-Hingston to Finsler manifolds. Moreover, we exhibit some stationary spacetimes with a finite number of $t$-periodic light rays and compute a lower bound for the period of the light rays when the flag curvature of the Fermat metric is $\eta$-pinched.
\end{abstract}
\maketitle
\section{introduction}
The aim of this paper is to show several  multiplicity results of $t$-periodic light rays in conformally stationary spacetimes.
The notion of periodic trajectories in spacetimes makes sense whenever there is a system of coordinates in which the metric coefficients do not depend on the timelike coordinate or depend periodically. The first time that these trajectories appear in literature up to our knowledge is the reference \cite{BenFor90}, where V. Benci and D. Fortunato study timelike $t$-periodic trajectories in static spacetimes. Contemporarily to this work several results on the static case were published (see \cite{BenForGia91,BenForGia91b,greco89}). These results were generalized by considering more general classes of Lorentzian manifolds than static spacetimes; it was studied the stationary case (see \cite{masiello92,sanchezproc99}) and time-dependent orthogonal splittings (see \cite{greco90,masiello95,maspis91}). Furthermore, in \cite{sanchez99}, M. S\'anchez obtains multiplicity results for the static case.

On the other hand, there are few works studying the more specific problem of existence of lightlike $t$-periodic geodesics (light rays). The basic references are the work \cite{candela96} by A. M. Candela  and the reference \cite{MasPic98}, where A. Masiello and P. Piccione use a shortening method to reobtain the results in \cite{candela96} without the constraint of differentiability on the boundary. Moreover, in \cite{sanchez99,sanchezproc99} although it is studied the timelike case, M. S\'anchez makes some remarks about the lightlike one. Following the notation in the above references, we say that a trajectory is $t$-periodic when we do not want to specify the period, otherwise we say that the trajectory is $T$-periodic, being $T$ the period of the trajectory.

The multiplicity results obtained in the timelike case concern always different $T$-periods. For example, in a standard stationary spacetime, that is, a spacetime that can be expressed as $(M\times\R,l)$,
where $l$ is the Lorentzian metric obtained from a Riemannian metric $g_0$ in $M$,  a positive differentiable function $\beta:M\rightarrow \R$ and a vector field in $M$, $\delta\in \mathfrak{X}(M)$ as
\begin{equation}\label{stationarymetric}
l(x,t)[(v,\tau),(v,\tau)]=g_0(x)[v,v]+2g_0(x)[\delta(x),v]\tau-\beta(x)\tau^2,
\end{equation}
(here $(x,t)\in M\times\R$ and $(v,\tau)\in T_xM\times\R$),
 they show (see \cite[Theorem 1.1]{sanchezproc99}) that whenever $\pi_1 (M)$ is not trivial and ${\mathcal C}$ is a non-trivial class of conjugacy of $\pi_1 (M)$, there exists $T_{\mathcal C}\in\R$, such that
\begin{itemize} 
 \item for every $T>T_{\mathcal C}$ there exists a $T$-periodic timelike geodesic $(x,t)$ of the spacetime $(M\times\R,l)$ such that $x\in {\mathcal C}$,
 \item there exists a $T_{\mathcal C}$-periodic light ray $(x,t)$ with $x\in{\mathcal C}$ 
 \item and for $0<T\leq T_{\mathcal C}$ there is no $T$-periodic timelike geodesic with $x$ in ${\mathcal C}$.
 \end{itemize}  
 We observe that two $t$-periodic trajectories are non-equivalent or geometrically distinct when the images do not coincide and there is no time traslation that brings one into the other. Even if the fundamental group of $M$ is infinite, the result in \cite{sanchezproc99} does not guarantee multiplicity of non-equivalent $t$-periodic light rays or $T$-periodic timelike geodesics; in fact, the trajectories corresponding to different conjugacy classes may be iterations of a unique trajectory. A similar problem occurs with the multiplicity results in \cite{candela96}. The goal of this paper is to use the relation between lightlike geodesics in a standard stationary spacetime and closed geodesics in the base $M$ of a certain Finsler metric introduced in \cite{cmm} called ``the Fermat metric''. Then a lightlike geodesic is $t$-periodic if and only if the projection in $M$ is a closed pregeodesic for the Fermat metric. Furthermore, as we comment in Subsection~\ref{conformal}, the results are still valid in the more general class of  spacetimes admitting a complete conformal Killing vector field with section.

 In the static case the problem is simpler since the Fermat metric is in fact Riemannian. We observe that in this situation the Fermat metric coincides with the Jacobi metric introduced by M. S\'anchez in \cite{sanchez99} for $E=0$ and as the author observes (see the comments after Corollary 3.10 in \cite{sanchez99}), multiplicity results for Riemannian closed geodesics can be used.  The main results on the existence of infinitely many geometrically distinct closed geodesics are the celebrated paper of D. Gromoll and W. Meyer \cite{GroMey69b} and the more recent result of V. Bangert and N. Hingston \cite{bh}. In the same paper (see Corollary 3.10 in \cite{sanchez99}) the author shows the existence of at least two distinct $t$-periodic light rays in static spacetimes. This result cannot be reproduced in the stationary context unless you show the existence of two closed Finslerian geodesics. The reason for that, it is the use of  the reversibility of Riemannian geodesics for the proof in the static case (each orientation gives a different $t$-periodic light ray), whereas the Fermat metric is non-reversible. Some recent results show the existence of two closed Finsler geodesics in some particular cases (see \cite{bl,HuLong07,rade06}).

 Bearing in mind the Fermat metric, we exploit multiplicity results of Finsler closed geodesics; a  Gromoll-Meyer type theorem was obtained in the context of Finsler metrics by H. Matthias (see \cite{matthias78,matthias80}), whereas we include in this work an extension of the Bangert-Hingston theorem. In this way we get two theorems of existence of infinitely many  $t$-periodic light rays: Theorem \ref{gmstationary} when the Cauchy surface $M$ satisfies the Gromoll-Meyer condition and Theorem \ref{bhstationary} when the fundamental group of $M$ is infinite abelian. None of the theorems apply when the Cauchy surface is $S^3$, so that it is a natural question if there do exist infinitely many $t$-periodic light rays in this situation. In Section \ref{katok} we provide several examples of spacetimes with only a finite number of $t$-periodic light rays, that we obtain from the classical Katok metrics. Summing up, we prove that there exist infinite $t$-periodic light rays when $\limsup b_k(\Lambda M)=+\infty$ or $\pi_1(M)$ is infinite abelian and we exhibit some examples with a finite number of $t$-periodic light rays with $M$ being $S^{2n}$, $S^{2n-1}$, $P^nC$, $P^nH$ or $P^2Ca$.  Another interesting problem is if the period $T$ of the light ray can be arbitrarily small. As $T$ coincides with the length of the Fermat geodesics associated to the light ray, the answer is no, because the inyectivity radius of a compact Finsler manifold has a lower bound. Moreover, we obtain some estimates of this bound imposing some restrictions on the flag curvature. In particular, when the flag curvature is $\eta$-pinched, we can establish a lower bound for the period $T$ using the reversibility of the Fermat metric that we compute in Subsection~\ref{reversibility}.
 \section{Multiplicity results for closed geodesics on Finsler manifolds.}
\subsection{Conformally stationary spacetimes}\label{conformal} We begin by observing that the study of lightlike geodesics in conformally standard stationary spacetimes can be reduced to a standard stationary ambient, because the support of lightlike geodesics is preserved by conformal changes (see for example \cite{Piccio97}). In the following we give sufficient conditions for a spacetime to be conformally standard stationary.
Let $(\bar{M},\bar{g})$ be a  spacetime endowed with a timelike complete conformal Killing vector field $K$ admitting a section, that is, a timelike complete vector field satisfying ${\mathcal L}_K{\bar{g}}=\lambda \bar{g}$ for some  function $\lambda:\bar{M}\rightarrow\R$ and such that there exists a spacelike hypersurface $S$ meeting exactly at one instant every integral line of $K$. Then, $(\bar{M},\bar{g})$ is globally conformal to a standard stationary spacetime. To see this, it is enough to consider the conformal Lorentzian metric $g=-\frac{1}{\bar{g}(K,K)}\bar{g}$. For this metric, $K$ is a Killing field satisfying $g(K,K)=-1$ (see \cite[Lemma 2.1]{sanchez97}). Moreover as we have assumed that $K$ is complete and it admits a section, $(\bar{M},g)$ is standard stationary. In \cite{JS08}, M. S\'anchez and the second author give a more precise result.
\begin{lemma}
A distinguising spacetime $(\bar{M},\bar{g})$ endowed with a timelike complete conformal Killing field $K$  is globally conformal to a standard stationary spacetime.
\end{lemma}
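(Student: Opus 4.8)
The plan is to reduce to a genuine Killing field and then to read the standard splitting off the flow of $K$. Since the chronological futures and pasts $I^{\pm}$ --- and hence the distinguishing condition --- depend only on the conformal class of $\bar g$, I would first pass to the conformal metric $g:=-\bar g(K,K)^{-1}\bar g$, which is well defined and Lorentzian because $K$ timelike forces $\bar g(K,K)<0$. By \cite[Lemma 2.1]{sanchez97}, recalled above, $K$ is then a Killing field for $g$ with $g(K,K)\equiv-1$; completeness of $K$ is unaffected, being a property of its flow and not of the metric, and after possibly replacing $K$ by $-K$ we may assume $K$ future-pointing. It therefore suffices to prove that a distinguishing spacetime $(\bar M,g)$ carrying a complete, unit, future-pointing timelike Killing field $K$ is standard stationary; then $(\bar M,\bar g)=(\bar M,-\bar g(K,K)\,g)$ is globally conformal to it.

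The flow $\phi\colon\mathbb{R}\times\bar M\to\bar M$ of $K$ is a smooth action of $\mathbb{R}$ by isometries. I would first note that it is free: $K$ is nowhere zero, so there are no fixed points, and a periodic orbit would be a closed timelike curve, which a distinguishing (hence causal) spacetime does not admit. Everything then rests on showing that this action is \emph{proper}. Granting that, $\pi\colon\bar M\to Q:=\bar M/K$ is a smooth principal $\mathbb{R}$-bundle over a manifold $Q$, and since $\mathbb{R}$ is contractible the bundle is trivial; a trivialization is a $K$-equivariant diffeomorphism $\bar M\cong Q\times\mathbb{R}$ carrying $K$ to $\partial_t$, so $S:=Q\times\{0\}$ is an embedded hypersurface meeting every integral line of $K$ exactly once. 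Because $\partial_t$ is Killing, the metric coefficients in this splitting are $t$-independent and $g(\partial_t,\partial_t)=-1$, so $g$ has the form \eqref{stationarymetric} with $\beta\equiv1$; choosing $S$ spacelike, so that the induced metric $g_0$ is Riemannian (see \cite{JS08}), shows that $(\bar M,g)$ is standard stationary.

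Properness is where I expect the real difficulty, and it is here that the distinguishing hypothesis is genuinely needed, beyond mere causality. By the standard criterion for properness of an $\mathbb{R}$-action it is enough to produce, around each $p$, a neighborhood $U$ with $\{t:\phi_t(U)\cap U\neq\emptyset\}$ bounded. If this fails one obtains sequences $x_n\to p$ and $t_n\to+\infty$ with $\phi_{t_n}(x_n)\to p$, so that the orbit segments $\sigma_n(s):=\phi_s(x_n)$, $s\in[0,t_n]$, are future-directed timelike curves joining $x_n$ to $\phi_{t_n}(x_n)$ whose $g$-arclength equals $t_n\to\infty$. Fixing a complete auxiliary Riemannian metric $h$ and passing to a subsequence, I would split into two cases according to the $h$-lengths of the $\sigma_n$. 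If they remain bounded, a compactness (limit-curve) argument yields a causal loop through $p$, which is nonconstant because $|K|_h$ is bounded below near $p$ while $t_n\to\infty$ --- contradicting causality. If they are unbounded, the limit curve theorem produces a future-inextendible causal curve issuing from $p$, and combining this with the fact that the $\sigma_n$ return arbitrarily close to $p$ one derives a contradiction with the distinguishing condition at $p$. This causal-theoretic step is the technical heart of the statement, and for its details I would follow \cite{JS08}.
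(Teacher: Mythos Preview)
Your proposal is correct and in fact goes further than the paper's own proof, which consists solely of the citation ``See \cite[Theorem 1.2]{JS08}.'' What you have sketched is precisely the argument of \cite{JS08}: the conformal normalization to a unit Killing field via \cite[Lemma 2.1]{sanchez97}, freeness of the flow from causality, properness from the distinguishing hypothesis through a limit-curve dichotomy, triviality of the resulting principal $\mathbb{R}$-bundle, and finally the existence of a \emph{spacelike} section---the last two technical points you rightly defer to \cite{JS08}.
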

\begin{proof}
See \cite[Theorem 1.2]{JS08}.
\end{proof}
From now on, we will assume that $M\times\R$ is endowed with the metric \eqref{stationarymetric} and will state all the theorems for standard stationary spacetimes, bearing in mind that the same results hold in the more general class described above.
For more information about Lorentzian geometry we suggest the references \cite{BeErEa96,HaEl73,One83} and for the physical interest of stationary spacetimes, \cite[Chap. 7]{SaWu77}.
\subsection{Standard stationary spacetimes and the Fermat metric}
As it was observed in \cite{cmm}, Fermat's principle implies that a curve $(x,t):[0,1]\onto M\times\R$ is a future-pointing lightlike geodesic
if and only if the projection $x:[0,1]\onto M$ is a pregeodesic for the Fermat metric on $M$, that is, the Finsler metric given by
\begin{equation}\label{fermatmetric}
F(x,v)=\frac{1}{\beta(x)}g_0(\delta(x),v)+\frac{1}{\beta(x)}\sqrt{g_0(\delta(x),v)^2+\beta(x) g_0(v,v)},
\end{equation}
where $(x,v)\in TM$, and the time component is
\begin{equation*}
t(s)=t_0+\int_0^s\left( \frac{1}{\beta(x)}g_0(\delta(x),v)+\frac{1}{\beta(x)}\sqrt{g_0(\delta(x),v)^2+\beta(x) g_0(v,v)}\right)\df s,
\end{equation*}
that is, the projection of a lightlike geodesic is a Fermat geodesic up to a re\-pa\-ra\-me\-tri\-za\-tion and
  $t(s)-t_0$ is the Fermat length of $x:[0,s]\onto M$.
 We say that a lightlike geodesic $\gamma=(x,t):[0,1]\onto M\times\R$ is $t$-periodic when there exists a positive real $T>0$ such that $x$ is a smooth closed curve in $M$, $t(1)=t(0)+T$ and $\dot t(1)=\dot t(0)$. If we want to specify the period, we say that the light ray is $T$-periodic. Furthermore, we say that two $t$-periodic trajectories are equivalent if the images coincide up to a constant traslation in the time component. We are interested in multiplicity results of non-equivalent $t$-periodic lightlike geodesics. In the setup of Fermat metrics or more generally, Finsler manifolds, this is equivalent to show multiplicity of geometrically distinct closed Finsler geodesics, that is, closed geodesics with distinct support.

\subsection{Finsler metrics} We recall some basic facts about Finsler manifolds and we refer
the reader to \cite{bcs} for notations and for any further
information.
Let $M$ be a smooth, real, paracompact, connected manifold of finite dimension.
A Finsler structure on $M$ is a continuous non negative function
$F:TM \rightarrow \R$ which is  smooth  on $TM \setminus 0$,
vanishing only on the zero section, fiberwise positively
homogeneous of degree one, i.e. $F(x,\lambda y)=\lambda F(x,y)$,
for all $x\in M$, $y\in T_x M$ and $\lambda>0$ and which has fiberwise strongly convex square, that is,
\[g_{ij}(x,y)=\left[\frac 12\frac{\partial^2 (F^2)}{\partial y^i\partial y^j}(x,y)\right]\]
is positively defined for any $(x,y)\in TM\setminus 0$. We will also use the tensor 
\begin{equation}\label{fundtensor}
g=g_{ij}(x,y)\df x^i\otimes\df x^j,
\end{equation}
that is usually called {\it the fundamental tensor} of $(M,F)$ and it is a symmetric section of the tensor bundle $\pi^*(T^*M)\otimes\pi^*(T^*M)$, where $\pi^*(T^*M)$ is the pulled-back tensor of the projection $\pi^*:T^*M\rightarrow M$ through the map $\pi:TM\setminus 0\rightarrow M$
(see \cite{bcs}). The length of a
piecewise smooth curve $\gamma:[a,b]\lra M$ with respect to the
Finsler structure $F$ is defined by
\[
L(\gamma)= \int_{a}^{b} F(\gamma(s), \fis{\gamma}(s))ds,
\]
while the Energy of $\gamma$ as
\[
E(\gamma)=\int_{a}^{b} F^2(\gamma(s),\fis{\gamma}(s)) ds.
\]
Using the H\"older's inequality,  it is easy to check that $L(\gamma) \leq E(\gamma)^{\frac{1}{2}}
|b-a|^{\frac{1}{2}}$.

In this section we always assume that $M$ is a compact manifold and we fix a
Riemannian metric $h$. In particular
there exist two positive constants $c_1,c_2$ such that
\begin{equation} \label{tu}
c_1 h(v,v) \leq F^2(x,v) \leq c_2 h(v,v), \forall (x,v)\in TM.
\end{equation}
Let $S^1$ be the unit circle, viewed as the quotient $[0,1] /
\{0,1\}$ and denote by $\Lambda M$ (or simply $\Lambda$) the infinite dimensional Hilbert
manifold of all loops $\gamma:S^1 \lra M$, i.e.
$\gamma(0)=\gamma(1)$, of Sobolev class $H^1$; with $\Omega(M,p)$ we
denote the loop space of $M$, i.e. all loops $\gamma\in \Lambda M$
such that $\gamma(0)=\gamma(1)=p$ for some $p\in M$. As all the manifolds $\Omega(M,p)$ with $p\in M$ are homotopically equivalent and we only need to consider the homotopy groups of $\Omega(M,p)$, we will use simply $\Omega$. The infinite dimensional manifold $\Lambda$ will be endowed with the Hilbert space inner product:
\begin{equation}\label{product}
\langle V, W\rangle =\int^{1}_{0}[ h(V,W) + h( V', W') ]dt,
\end{equation}
where $V$ and $W$ are $H^1$-vector fields along an $H^1$-curve $\gamma$ and $'$ denotes the covariant differentiation along $\gamma$ associated to
the Levi-Civita connection of the metric $h$.

 Note that the
length and the energy are defined on $\Lambda M$ and their critical
points are exactly the closed geodesics, affinely parametrized when considering the energy. Moreover the energy
functional defined on $\Lambda$ is $C^{2-}$, i.e. $C^1$ and the
differential is locally Lipschitzian and it satisfies the
Palais-Smale condition (see \cite{me} and also \cite{cmm}).
The compact group $S^1$ acts equivariantly on $\Lambda M$ via the operation of $S^1$
on the parameter circle $S^1$ and the orbits of this action are smooth compact submanifolds.
In particular the critical points of $E$ are never isolated. We remark, that in the Riemannian case $S^1$ must be substituted by $\O(2)$ because of the reversibility.
\subsection{Gromoll-Meyer theorem}
The most celebrated result in the theory of closed geodesics and its multiplicity is that of D. Gromoll and W. Meyer (see \cite{GroMey69b}). For many years it was looked for a topological condition assuring the existence of infinitely many geometrically  distinct closed geodesics in a compact manifold. The bigger difficulty to reach this goal is that Lyusternik-Schnirelmann theory does not distinguish when a closed geodesic is prime or not, that is, when it is the iteration of another closed geodesic. On the other hand, due to the fact that there is an $\O(2)$-action  that preserves closed geodesics,  we must use equivariant Morse theory in order to distinguish the tower generated by the $\O(2)$-action, that is, closed geodesics obtained from a fixed one by applying rotations. In order to show the multiplicity of geometrically distinct closed geodesics Gromoll and Meyer imposed the following condition on the topology of the free loop space of the manifold $M$, that we denote $\Lambda M$. Let $b_k(\Lambda M)$ be the $k$-th Betti number of $\Lambda M$. Then $M$ satisfies the Gromoll-Meyer condition  if it is simply connected and $\sup b_k(\Lambda M)=+\infty$. We observe that  the Gromoll-Meyer condition is empty in dimension $3$ if we accept the Poincar\'e conjecture. This is the most interesting case in General Relativity, but in \cite[Remark 8.4]{bimerpi07} it is said that the same proof works when assuming the condition $\limsup_{k\rightarrow \infty} b_k(\Lambda M)=+\infty$, that is in fact non-empty in dimension $3$. In the generalized version proved by H. Matthias for Finsler metrics in \cite{matthias78,matthias80} the theorem states the following.
\begin{thm}\label{gromollmeyer}
Let $(M,F)$ be a compact Finsler manifold satisfying the Gromoll-Meyer condition. Then there exist infinitely many geometrically distinct closed geodesics.
\end{thm}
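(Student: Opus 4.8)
The plan is to argue by contradiction using $S^1$-equivariant Morse theory for the energy functional $E$ on the free loop space $\Lambda M$, following the Riemannian proof of Gromoll and Meyer but with the reversibility replaced by the weaker symmetry, as Matthias does. Recall from the previous subsection that $E$ is $C^{2-}$, bounded below, satisfies the Palais--Smale condition on $\Lambda M$, and that its critical points are the closed geodesics parametrized proportionally to arclength, each giving rise to a whole critical $S^1$-orbit. Since $M$ is compact, the constant loops form a nondegenerate critical submanifold (diffeomorphic to $M$) sitting at level $0$, and a small positive sublevel set $\Lambda^0$ of $E$ deformation retracts onto it; as $M$ is a finite-dimensional compact manifold its Betti numbers are bounded, so the Gromoll--Meyer hypothesis $\sup_k b_k(\Lambda M)=+\infty$ is equivalent to $\sup_k b_k(\Lambda M,\Lambda^0)=+\infty$. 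It therefore suffices to prove that the relative homology of $(\Lambda M,\Lambda^0)$ has bounded Betti numbers whenever there are only finitely many geometrically distinct closed geodesics.

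So I would assume the only geometrically distinct closed geodesics are $c_1,\dots,c_r$, so that every nonconstant closed geodesic is an iterate $c_i^m$, the critical values form a discrete set, and each level carries only finitely many critical $S^1$-orbits. For each such orbit $O$ at level $\kappa>0$ one considers the local equivariant homology $\bar H_*(O):=H_*^{S^1}\bigl((\Lambda^{\le\kappa})_{O},(\Lambda^{<\kappa})_{O}\bigr)$ of a small $S^1$-invariant neighbourhood. By the generalized Morse (shifting) lemma, valid in the $C^{2-}$ setting and for the $S^1$-action on $\Lambda M$, $\bar H_*(O)$ is finitely generated and concentrated in degrees between $\mathrm{ind}(c_i^m)$ and $\mathrm{ind}(c_i^m)+\mathrm{null}(c_i^m)+1$. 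The equivariant Morse inequalities for $(\Lambda M,\Lambda^0)$ relative to $E$ then bound $b_k(\Lambda M,\Lambda^0)$ by the sum of the $k$-th local Betti numbers over all critical orbits.

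The decisive input is Bott's iteration theory for the Morse index of closed geodesics, extended to Finsler metrics by Matthias: for each fixed $c_i$ one has $\mathrm{ind}(c_i^m)=m\,\bar\alpha_i+O(1)$ for an \emph{average index} $\bar\alpha_i\ge 0$, while $\mathrm{null}(c_i^m)\le 2(\dim M-1)$ stays bounded. If all $\bar\alpha_i>0$, then for every fixed $k$ and each $i$ there are only finitely many $m$ with $\mathrm{ind}(c_i^m)\le k\le \mathrm{ind}(c_i^m)+\mathrm{null}(c_i^m)+1$; hence only finitely many critical orbits contribute to degree $k$, each with finitely generated local homology, so $b_k(\Lambda M,\Lambda^0)$ is bounded independently of $k$, contradicting the hypothesis. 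If some $\bar\alpha_i=0$, that $c_i$ is an exceptional degenerate geodesic and one invokes the Gromoll--Meyer local-homology stabilization lemma: along a suitable subsequence the shifted local homologies of the orbits of $c_i^m$ are isomorphic and nonzero, and, transported through the equivariant structure, they force $\sup_k b_k(\Lambda M,\Lambda^0)=+\infty$ directly, again a contradiction. Either way the theorem follows.

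The main obstacle is precisely the iteration problem in the last paragraph: disentangling the contribution of a single closed geodesic together with all of its iterates to the equivariant homology of $\Lambda M$, which requires both the precise index asymptotics (Bott's formulas in the Finsler form due to Matthias) and the Gromoll--Meyer stabilization of local homology along iterates. A secondary subtlety, flagged already in the excerpt, is that since $F$ is non-reversible one must carry out the shifting theorem and the local homology computations equivariantly for $S^1$ rather than $\O(2)$; this changes the concrete local models of the critical orbits but not the structure of the argument.
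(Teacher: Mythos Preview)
The paper does not actually prove Theorem~\ref{gromollmeyer}; it merely states it and attributes it to Matthias \cite{matthias78,matthias80}. So there is no ``paper's own proof'' to compare with: your sketch is being judged against the standard Gromoll--Meyer/Matthias argument it claims to reproduce.

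Your outline is broadly the right one, and the treatment of the case where every average index $\bar\alpha_i$ is positive is essentially correct (you should say that the number of iterates contributing to a given degree~$k$ is \emph{bounded independently of $k$}, not just finite, and that the local Betti numbers are uniformly bounded by a constant depending only on $\dim M$; both follow from Bott's formula and the splitting lemma, as you indicate).

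The genuine gap is your handling of the case $\bar\alpha_i=0$. You write that the stabilization of local homologies along the iterates of such a $c_i$ ``force[s] $\sup_k b_k(\Lambda M,\Lambda^0)=+\infty$ directly, again a contradiction.'' But $\sup_k b_k(\Lambda M)=+\infty$ is precisely the \emph{hypothesis} of the theorem, so deriving it cannot yield a contradiction. The correct argument goes the other way: if $\bar\alpha_i=0$, then by Bott's formula $\mathrm{ind}(c_i^m)$ is \emph{uniformly bounded} in~$m$ (since the index function $I(\omega)\ge 0$ has zero average, hence vanishes a.e.), and the nullity is bounded by $2(\dim M-1)$. Therefore all iterates of $c_i$ contribute only to degrees $\le N_i$ for some fixed $N_i$. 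For $k>\max_i N_i$ only the geodesics with $\bar\alpha_j>0$ contribute, and for those you have already bounded the contribution independently of $k$. Since $M$ is compact and simply connected, each individual $b_k(\Lambda M)$ is finite, so the finitely many low degrees cause no trouble, and one concludes $\sup_k b_k(\Lambda M)<\infty$, contradicting the Gromoll--Meyer hypothesis. Your invocation of the stabilization lemma is relevant (it guarantees the periodicity of both indices and local homologies in~$m$, which underlies the uniform bounds), but the logical role you assign to it is inverted.
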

By applying Theorem \ref{gromollmeyer} and Fermat's principle to the Fermat metric we obtain the following result about multiplicity of $t$-periodic light rays.
\begin{thm}\label{gmstationary}
Let $(M\times\R,l)$ be a standard stationary spacetime with the metric $l$ as in \eqref{stationarymetric}. If $M$ is compact and satisfies the Gromoll-Meyer condition, then there exist infinitely many non-equivalent $t$-periodic light rays in $(M\times\R,l)$.
\end{thm}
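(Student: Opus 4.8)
The plan is to read the statement off from Theorem \ref{gromollmeyer} applied to the Fermat metric, by means of the correspondence recalled above between future-pointing lightlike geodesics of $(M\times\R,l)$ and pregeodesics of the Fermat metric on $M$. Since $M$ is compact and, by hypothesis, satisfies the Gromoll-Meyer condition, the pair $(M,F)$ with $F$ as in \eqref{fermatmetric} is a compact Finsler manifold satisfying the Gromoll-Meyer condition. Hence Theorem \ref{gromollmeyer} provides a sequence $\{x_n\}_{n\in\N}$ of closed $F$-geodesics that are pairwise geometrically distinct, i.e. have pairwise distinct supports in $M$.

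Next I would lift each $x_n$ to a $t$-periodic light ray. Parametrizing $x_n:[0,1]\onto M$ as a critical point of the energy, hence with constant Finsler speed, I set $t_n(s)=\int_0^s F(x_n,\dot x_n)\,\df s$ and $\gamma_n=(x_n,t_n):[0,1]\onto M\times\R$. By Fermat's principle $\gamma_n$ is a future-pointing lightlike geodesic of $(M\times\R,l)$; moreover $x_n$ is a smooth closed curve, $t_n(1)-t_n(0)$ equals the Fermat length $L(x_n)=:T_n>0$, and $\dot t_n(1)=F(x_n(1),\dot x_n(1))=F(x_n(0),\dot x_n(0))=\dot t_n(0)$ because $x_n$ closes up smoothly. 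Thus $\gamma_n$ is a $T_n$-periodic light ray.

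Finally, I would verify that the $\gamma_n$ are pairwise non-equivalent. If $\gamma_n$ and $\gamma_m$ were equivalent, their images in $M\times\R$ would coincide up to a translation in the $\R$-factor, which in particular forces the equality of the projections $x_n([0,1])=x_m([0,1])$; for $n\neq m$ this contradicts the geometric distinctness of $x_n$ and $x_m$. Therefore $\{\gamma_n\}_{n\in\N}$ is an infinite family of pairwise non-equivalent $t$-periodic light rays in $(M\times\R,l)$, as claimed. I do not expect a genuine obstacle in this argument: once Theorem \ref{gromollmeyer} (the Matthias extension of Gromoll-Meyer) and Fermat's principle are in hand, the proof is purely a dictionary between closed Finsler geodesics and $t$-periodic light rays, and the real work sits in the cited theorem. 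The only points deserving care are that \emph{geometrically distinct} at the level of $F$-geodesics (distinct support in $M$) is strictly stronger than \emph{non-equivalent} at the level of light rays (the latter quotients only by time translations), and that the reparametrization implicit in Fermat's principle preserves the smoothness and closedness of the loop --- both handled above.
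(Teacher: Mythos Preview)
Your proposal is correct and follows exactly the approach indicated in the paper, which simply states that the result follows ``by applying Theorem \ref{gromollmeyer} and Fermat's principle to the Fermat metric'' without spelling out the dictionary. Your write-up makes explicit the lift $\gamma_n=(x_n,t_n)$ and the passage from geometric distinctness of the $x_n$ to non-equivalence of the $\gamma_n$, which is precisely the translation the paper leaves implicit.
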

\subsection{Bangert-Hingston theorem.} The second result on the multiplicity of closed geodesics on Riemannian metrics is that of V. Bangert and N. Hingston in \cite{bh}. They prove that when the fundamental group of $M$ is infinite abelian there is always an infinite number of geometrically distinct closed geodesics.  We observe that there is another result of W. Ballmann (cf. \cite{bal86}) on the multiplicity of closed geodesics on Riemannian manifolds, assuming that the fundamental group of $M$ is almost nilpotent and contains a copy of $\Z$. It would be interesting to generalize this result to Finsler manifolds. We will include a proof of the Bangert-Hingston theorem in the Finsler case even if it is very similar to that of the Riemannian case in order to clarify some points. We observe that we have slightly modified the proof in \cite{bh} to take in consideration the energy functional rather than the length one, which allows us to apply Lyusternik-Schnirelmann theory to the space of $H^1$-curves.
\begin{thm}\label{bangerthingston}
Let $(M,F)$ be a compact Finsler manifold of dimension $\geq2$
whose fundamental group is infinite abelian. Then there exist infinitely many geometrically
distinct non trivial closed geodesics in $M$.
\end{thm}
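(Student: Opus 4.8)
The plan is to argue by contradiction, following the scheme of Bangert-Hingston \cite{bh} but run for the energy functional $E$ on the $H^1$-loop space $\Lambda M$, which satisfies the Palais-Smale condition as recalled above. Assume, then, that $(M,F)$ has only finitely many geometrically distinct closed geodesics, and fix $a\in\pi_1(M)$ of infinite order. Because $\pi_1(M)$ is abelian, the connected components of $\Lambda M$ are indexed by the elements of $\pi_1(M)$; for $k\ge 1$ let $\Lambda_k$ be the component of the loops freely homotopic to $a^k$. Since $E$ is bounded below on $\Lambda_k$, satisfies Palais-Smale, and $\Lambda_k$ is complete, its infimum over $\Lambda_k$ is attained at a closed geodesic $c_k$; set $\kappa_k:=L(c_k)$. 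Two elementary facts are needed. First, $\kappa_k\to\infty$: otherwise infinitely many $c_k$ would be closed geodesics of uniformly bounded length, and since each prime closed geodesic has only finitely many iterates below a given length, these $c_k$ would meet only finitely many of the pairwise distinct classes $a^k$ --- impossible. Second, joining a minimizing loop of $\Lambda_k$ to one of $\Lambda_l$ by short $h$-geodesic arcs yields $\kappa_{k+l}\le\kappa_k+\kappa_l+D$ for some constant $D=D(M,F)$; hence the increments $\kappa_{k+1}-\kappa_k\le\kappa_1+D$ are bounded, and by Fekete's lemma $\kappa_k/k$ converges to a limit $\bar\kappa$ (positive, as the structural step below will show).

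Next I would read off the consequences of finiteness. Each $c_k$ is an iterate of one of the finitely many prime closed geodesics, so by the pigeonhole principle there are a prime closed geodesic $\gamma$ and an infinite set $S\subseteq\N$ with $c_k=\gamma^{\,l(k)}$ for $k\in S$. As $\pi_1(M)$ is abelian, $[\gamma]^{\,l(k)}=a^k$ in $\pi_1(M)$; since $a$ has infinite order, $\gamma$ must have infinite order, $l(k)$ must grow linearly in $k$, and $\bar\kappa$ is a positive multiple of $L(\gamma)$ (the torsion of $\pi_1(M)$ causing only straightforward bookkeeping). In particular, for infinitely many $k$ the loop $\gamma^{\,l(k)}$ is the global minimum of $E$ on its component of $\Lambda M$, so its Morse index $\mathrm{ind}(\gamma^{\,l(k)})$ is $0$. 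By the Bott-type iteration formulae for closed geodesics of a Finsler metric --- valid just as in the Riemannian case --- the average index $\widehat{\mathrm{ind}}(\gamma)=\lim_m\mathrm{ind}(\gamma^m)/m$ exists; it must vanish, so $\mathrm{ind}(\gamma^m)$ is bounded for all $m$, and the nullity $\mathrm{nul}(\gamma^m)$ is bounded as well. Thus the Gromoll-Meyer local ($S^1$-equivariant) homology of every iterate of $\gamma$ is concentrated in a band of degrees of bounded width.

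The core of the argument is then Morse- and Lyusternik-Schnirelmann-theoretic. Applying the deformation lemma for $E$ and Lyusternik-Schnirelmann theory inside each $\Lambda_k$, one shows that the minimum $\gamma^{\,l(k)}$ --- whose critical set at level $\kappa_k$ is a finite union of circles and whose local homology occupies only a bounded band --- cannot account for the whole homotopy type of $\Lambda_k$ when $\dim M\ge 2$ (for $\dim M=1$, $\Lambda_k\simeq S^1$ and the statement is vacuous). Consequently $E|_{\Lambda_k}$ has critical values above $\kappa_k$, producing additional closed geodesics, and following these critical points as $k$ varies --- using the bounded increments $\kappa_{k+1}-\kappa_k$, the uniform bounds on indices and nullities of the iterates of the finitely many primes, and, when $\pi_1(M)\not\cong\Z$, the fact that finitely many cyclic subgroups $\langle[\gamma_j]\rangle$ cannot exhaust the infinitely many $a^k$ --- one obtains infinitely many geometrically distinct closed geodesics, contradicting the hypothesis. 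Non-reversibility of $F$ enters only by replacing the $\O(2)$-action by the $S^1$-action, which changes the computation of the equivariant local homology but not the structure of the proof.

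I expect this last step to be the main obstacle: handling the possibly degenerate iterates $\gamma^m$ via the Gromoll-Meyer shifting theorem and the Bott iteration of index and nullity, combining the topological information from all the components $\Lambda_k$ at once, and carrying $S^1$-equivariant (rather than $\O(2)$-equivariant) Morse theory through every step. By contrast, the construction of the minimizers $c_k$, the growth estimates, and the structural consequences of the commutativity of $\pi_1(M)$ are routine adaptations of the Riemannian argument, given the Palais-Smale condition for $E$ on $\Lambda M$ stated above and the Finsler analogue of Bott's iteration formulae.
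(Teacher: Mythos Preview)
Your outline departs substantially from the paper's proof, and the part you yourself flag as ``the main obstacle'' is in fact a genuine gap rather than a technicality.  The paper does not use Bott's iteration formulae, the average index, or Gromoll--Meyer local homology at all.  Following \cite{bh}, it produces in each component $\Lambda_{mk}$ an explicit non-trivial class $\alpha_m\in\pi_{n-1}(\Lambda_{mk},\gamma_m)$ (coming from the first non-vanishing $\pi_n(M)$ when $\pi_1(M)=\Z$, or from a second independent element $s\in\pi_1(M)$ otherwise), and the minimax value $\tau_m$ of $E$ over $\alpha_m$ satisfies the crucial quantitative bound $k_m\le\tau_m\le k_m+P$ with $P$ independent of $m$.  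A short lemma (the only place where the Finsler analysis is delicate: one shows strong $H^1$-convergence of a minimizing sequence to a minimizing geodesic using the fundamental tensor $g_{ij}$) rules out $\tau_m=k_m$, and then a prime-counting argument shows that for each large prime $p$ at least one of the two geodesics at levels $k_p$, $\tau_p$ has multiplicity $\le k$, yielding infinitely many prime closed geodesics.

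What is missing in your sketch is precisely the mechanism that the paper supplies.  You assert that ``the minimum $\gamma^{l(k)}$ cannot account for the whole homotopy type of $\Lambda_k$'', but you do not name which class in $\pi_*(\Lambda_k)$ or $H_*(\Lambda_k)$ detects a second critical value, nor do you control the level of that second critical value.  Without a bound of the form $\tau_k\le k_k+P$, the second critical point in $\Lambda_k$ could be an arbitrarily high iterate of one of your finitely many primes, and ``following these critical points as $k$ varies'' does not, by itself, force them to be geometrically distinct; indeed, the whole difficulty in closed-geodesic problems is that high iterates can absorb arbitrarily much topology.  Your invocation of $\widehat{\mathrm{ind}}(\gamma)=0$ and bounded local homology is correct as far as it goes, but it addresses only the minima $c_k$, not the hypothetical second critical points, and it does not replace the minimax construction.  (Also, for $\dim M=1$ the statement is not vacuous but false: $S^1$ with any Finsler metric has at most two prime closed geodesics, which is why $\dim M\ge2$ is assumed.)
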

\begin{proof}
Firstly we assume that $\pi_1(M)=\Z$.

Given $\gamma \in \Lambda$, we may define the \emph{m}-th iterate of
$\gamma$, which will be denoted by $\gamma^m$, as
$\gamma^m(s)=\gamma(ms)$. Let $t$ be a generator of $\pi_1(M)$. We
will denote by $\Lambda_m$ the connected component of $\Lambda$
which contains $t^m$. Since $M$ is not homotopy equivalent to a
circle then $\pi_n(M)\neq 0$ for some (minimal) $n>1$.  Indeed, if
$f:S^1 \lra M$ is a loop such that $[f]=t$, then $f_{\#}:\pi_k(S^1
) \lra \pi_k (M)$ is an isomorphism for every $k\in \N$, hence by Whitehead's theorem (see \cite{wh}), $M$ is homotopy equivalent to $S^1$. But this is impossible, because $H_l(M,\Z_2)=\Z_2\not= H_l(S^1,\Z_2)=0$, being $l=\dim (M)$.

 Lemma 1 and 2 in \cite{bh} provide the existence of
a $k\in \N$ such that,  for all $m\in \N$ there exist
non-trivial classes $\alpha_m\in \pi_{n-1}(\Lambda_{mk},
\gamma_m)$ which are in the image of $\pi_{n-1}(\Omega,\gamma_m)$.
Let
$\gamma_m \in \Lambda_{mk}$ be a closed geodesic with length
$
k_m = \inf \{ L(\gamma): \gamma \in \Lambda_{mk} \}.
$
Define
\[
\tau^2_m= \inf_{f\in \alpha_m } \sup \{E(\gamma):\gamma \in
\mathrm{Im}f \}.
\]
If we consider minus the gradient of the functional $E$ on $\Lambda$ with respect to the
Hilbert inner product $\big \langle \cdot, \cdot \big\rangle$ in \eqref{product}, then its flow leaves
$\alpha_m \in \pi_{n-1}(\Lambda_{mk}, \gamma_m)$ invariant since $\gamma_m$ is a critical point.
Hence, we can apply Lyusternik-Schnirelmann theory as in \cite[9.2.5 Minimax Principle]{PalaisTerng89}, and therefore  there exists a closed geodesic
$\delta_m \in \Lambda_{m}$ with length $\tau_m$.  Moreover, using again Lemma 2 in \cite{bh}, we
deduce the inequality
\begin{equation} \label{io}
k_m \leq \tau_m \leq\sqrt{\frac{1}{2}( k^2_m +P^2)}\leq k_m+P,
\end{equation}
for a constant $P>0$ independent of $m$.

Let us first assume that
$\tau_m= k_m$ for a certain $m\in\N$ and  there is only a finite number
of geometrically distinct closed geodesics. In this context we may
prove the following result.
\begin{lemma}\label{lemma} There exists $f\in
\alpha_m$ such that $\sup \{E(\gamma): \gamma \in {\rm Im} f \}=\tau^2_m$ and such that $f(S^{n-1})$ is contained in the $S^1$-orbit of a closed geodesic in $\Lambda_{mk}$.
\end{lemma}
\begin{proof}
Let $\beta_1, \ldots, \beta_p $ be a maximal set of pairwise geometrically
distinct closed geodesics in $M$ which belong to $\Lambda_{mk}$. Let $
S^1 \cdot \beta_i$ be the orbit of $S^1$ through $\beta_i$
and let $\nu(S^1 \cdot \beta_i)$ be the normal bundle of $S^1\cdot \beta_i $ in $\Lambda$.
Then
there exists $r>0$ such that
\[
\mathcal{N}_i=\{ v\in \nu(S^1 \cdot \beta_i): \big \langle v,v   \big\rangle <r^2 \}
\]
is a normal disc bundle over $S^1 \cdot \beta_i$
and $\mathcal{N}_1, \cdots, \mathcal{N}_p$  are pairwise disjoint.
We shall prove that there exists $\epsilon>0$ such that
if $(E(\gamma))^{\frac{1}{2}} < \tau_m + \epsilon$, $\gamma \in \Lambda_{mk}$,
then $\gamma \in \mathcal{N}_i$ for some $i \in \{1,\ldots,p\}$.
Otherwise there exists a sequence
$(\phi_j )_{j\in \N}$ in $\Lambda_{mk}$  such that
$$\tau_m=k_m\leq E(\phi_j)^{\frac{1}{2}} <\tau_m +\frac{1}{j}$$ and $\phi_j$ does not
lie in any $\mathcal{N}_i$. From (\ref{tu}) we also have that there exists $K>0$ such that $L^h(\phi_j)\leq K$, where 
$L^h$ is the length functional with respect to the Riemannian metric $h$.
Then, by Ascoli-Arzel\'a theorem, there exists
a subsequence of $(\phi_j )_{j\in \N}$ which converges uniformly
to a continuous curve $\phi$. We will prove that
$\phi \in \Lambda_{mk}$ and $L(\phi)=\tau_m$, i.e.
$\phi$ is a closed geodesic, and there is a subsequence of $\phi_j$ converging strongly to $\phi$, which is an absurd since $\phi_j$ does not lie in $\mathcal{N}_i$ for any $i=1, \ldots, p$. The fact that $\phi \in \Lambda_{mk}$ follows from the uniform convergence (see \cite[Lemma 1.4.7]{jost}). To see that $L(\phi)=\tau_m$ we proceed as follows. Assume that $\phi_j$ is the subsequence uniformly convergent. We consider a partition $t_0=0<t_1<\ldots<t_q<t_{q+1}=1$ in such a way that $\phi_j([t_i,t_{i+1}])$, for $i=0,\dots,q$ is contained in a coordinate neighborhood $(U_i,\varphi_i)$, so that componing with the chart $\varphi_i$ we can think $\phi_j$ as a function with image in $\R^l$ with $l$ the dimension of $M$ (we omit in the following the composition with the chart, that is, we will write $\phi_j$ and $\dot\phi_j$ rather than $\varphi_i\circ\phi_j$ and $(\varphi_i\circ\phi_j)'$, and $F$ and $g$ rather than the composition with the inverse of the natural chart in $TM$, $(\varphi_i,\bar{ \varphi}_i)^{-1}:\varphi_i(U_i)\times\R^l\rightarrow TM$). As $E^h(\phi_j)$ is bounded ($E^h$ the energy functional for the Riemmanian metric $h$) and $\phi_j$ converges uniformly in $[t_i,t_{i+1}]$, the sequence $\phi_j$ admits a weakly convergent subsequent. Since the function $\varphi_{s_0}^u:H^1([a,b];\R^l)\rightarrow \R$  such that $\varphi_{s_0}^u(f)=f_u(s_0)$ is linear and continuous (here $f=(f_1,\ldots,f_l)$), the curve $\phi$ coincides with the weak limit of $\phi_j$, and $\phi$ is an $H^1$-function. 
Set
\[G(\phi_j(s))=\left\{\begin{array}{cl}
g(\phi(s),\dot \phi(s))[\dot\phi(s),\dot\phi_j(s)]&\text{if } \dot\phi_j(s)\not=0,\\
0&\text{if } \dot\phi_j(s)=0,
\end{array}\right.\]
where $g(\phi(s),\dot \phi(s))$ is the fundamental tensor in \eqref{fundtensor}. Applying Cauchy-Schwarz inequality (for Minkowskian norms) and the H\"older's one, we obtain
\begin{multline}\label{desigual}\int_{t_i}^{t_{i+1}} G(\phi_j)\df s\leq \int_{t_i}^{t_{i+1}} F(\phi,\dot\phi)F(\phi,\dot\phi_j)\df s\\\leq \left(\int_{t_i}^{t_{i+1}} F^2(\phi,\dot\phi)\df s\right)^{\frac{1}{2}}\left(\int_{t_i}^{t_{i+1}} F^2(\phi,\dot\phi_j)\df s\right)^{\frac{1}{2}}.\end{multline}
Moreover,  if we name
\[
U_j(s)=\left\{\begin{array}{cl}
\frac{\phi_j(s)}{F(\phi_j(s),\dot\phi_j(s))}&\text{if } F(\phi_j(s),\dot\phi_j(s))\not=0,\\
0&\text{if } F(\phi_j(s),\dot\phi_j(s))=0,
\end{array}\right.\]
then
\begin{equation}\label{jogodedentro}
\int_{t_i}^{t_{i+1}} \left(F^2(\phi,\dot\phi_j)-F^2(\phi_j,\dot\phi_j)\right)\df s=\int_{t_i}^{t_{i+1}} F^2(\phi_j,\dot\phi_j)\left(F^2(\phi,U_j)-F^2(\phi_j,U_j)\right)\df s.\end{equation}
As the image of $\{(\phi,U_j),(\phi_j,U_j)\}_{j\in\N}$ is contained  in a compact subset where $F$ is uniformly continuous, $\int_{t_i}^{t_{i+1}} F^2(\phi_j,\dot\phi_j)\df s$ is bounded and $\phi_j$ converges uniformly to $\phi$, we conclude that the quantity in \eqref{jogodedentro} goes to zero. By the election of $\phi_j$,  we have that
\begin{equation}\label{limitetau}\lim_{j\rightarrow \infty}\sum_{i=1}^q \int_{t_i}^{t_{i+1}} F^2(\phi_j,\dot\phi_j)\df s=\tau^2_m. \end{equation}
From Eq. \eqref{tu} and \eqref{desigual} follows that $\int_{t_i}^{t_{i+1}} G(\phi_j)\df s$ is a continuous operator and as $\phi_j$ converges weakly to $\phi$, 
$$\lim_{j\rightarrow\infty}\int_{t_i}^{t_{i+1}} G(\phi_j)\df s=\int_{t_i}^{t_{i+1}} F^2(\phi,\dot\phi)\df s,$$ 
(here we have used that by Euler's theorem $g(\phi,\dot \phi)[\dot\phi,\dot\phi]=F^2(\phi,\dot\phi)$). Moreover, using the above equation, the square of the inequality \eqref{desigual}, the Eq. \eqref{jogodedentro}, \eqref{limitetau} and the inequality $L(\phi)^2\leq E(\phi)$, we conclude that
\begin{equation*}L(\phi)^2\leq\sum_{i=1}^q\int_{t_i}^{t_{i+1}} F^2(\phi,\dot\phi)\df s\leq \lim_{j\rightarrow\infty}\sum_{i=1}^q\int_{t_i}^{t_{i+1}} F^2(\phi_j,\dot\phi_j)\df s=\tau_m^2.\end{equation*}
As $\tau_m$ coincides with the infimum of the length in $\Lambda_{km}$, we have in fact the equality in the last inequality, so that $\phi$ must be a geodesic and $L(\phi)=\tau_m$. Furthermore, 
from Eq. \eqref{jogodedentro} and \eqref{limitetau} we obtain that
\[\lim_{j\rightarrow\infty}\sum_{i=1}^q\int_{t_i}^{t_{i+1}} F^2(\phi,\dot\phi_j)\df s=\tau_m^2,\]
 and using this together with the fact that $\phi_j$ converges weakly to $\phi$ and the Cauchy-Schwarz inequality for Minkowskian norms we get
\begin{equation*}\lim_{j\rightarrow \infty}\sum_{i=1}^q\int_{t_i}^{t_{i+1}}g(\phi,\dot\phi)[\dot\phi_j-\dot\phi,\dot\phi_j-\dot\phi]\df s=0
,\end{equation*} 
which implies the strong convergence of $\phi_j$.

Choose $f\in \alpha_m$ such that $E(\gamma)^{\frac{1}{2}}< \tau + \epsilon$,
for any $\gamma \in {\rm Im}f$. Hence $f(S^{n-1})$ is contained
in $\mathcal{N}_{i}$  for some $1\leq i \leq p$.
Using the standard deformation retract on the zero section of $\mathcal{N}_i$ we obtain a new
$f$ such that $f(S^{n-1})$ lies in $S^1\cdot \beta_i$ concluding the proof.
\end{proof}
By the above lemma, we know that there exists a representative $f$ of $\alpha_m$ such that $f(S^{n-1})$ lies in an orbit of the
natural $S^1$-action on $\Lambda$. This implies that if $n>2$ then
$\alpha_m=0$, because there is a representative of $\alpha_m$ with the image contained in an $S^1$. This is a contradiction because $\alpha_m$ is non trivial. Assume that $n=2$, then $e_* (\alpha_m) \neq 0$, where
$e:\Lambda \lra M$ is the evaluation map at the base point. However $\alpha_m$
lies in the image of $\pi_1(\Omega,\gamma_m)$, so that it follows that
$e_* (\alpha_m)=0$, which is again a contradiction.

Now we can assume that $\tau_m > k_m$ for every $m\in \N$ and there exits $\epsilon
>0$ such that there is no closed geodesic with length $(k_1,k_1 +
\epsilon]$ in $\Lambda_k$: otherwise there are infinitely many
geometrically distinct closed geodesics in $M$, concluding our
proof.

Let $p$ be a prime number such that $p\,\epsilon >P$ (see \eqref{io}). The
multiplicity of a loop $\gamma \in \Lambda$, $\gamma$ not
homotopically equivalent to a constant loop, is the largest integer
$j$ such that $\gamma=\overline{\gamma}^j$. Note that if $j>k$ and
$\gamma \in \Lambda_{kp}$ then $j=sp$ for some $s\in\N$. We shall prove that
$\gamma_p$ and $\delta_p$ cannot both have multiplicity bigger
than $k$ from which one may deduce that if $p(i)$ is the $i$-th prime,
at least $(i-i_0)/k$ of the geodesics among $\gamma_p$ and
$\delta_p$ with $p\leq p(i)$ are geometrically distinct, where $i_0$ is the cardinality in the sequence of prime numbers of the biggest prime $p$ satisfying $p\,\epsilon\leq P$. Then our
result is proved.

Let us assume that the multiplicity of $\gamma_p$ and $\delta_p$
is bigger than $k$. Hence there exist $\overline{\gamma},
\overline{\delta} \in \Lambda_k$ such that
$\gamma_p=\overline{\gamma}^p$ and $\delta_p=\overline{\delta}^p$.
Since $k_p \leq p\, k_1$ and $L(\gamma_p)=p\, L(\overline{\gamma})$ we
get that $L(\overline\gamma)=k_1$ and $k_p=p\, k_1$. Moreover from $ \tau_p
=L(\delta_p)=p\, L(\overline\delta ) >k_p = p\, k_1$ we have, since
$\overline\delta \in \Lambda_k$, that $L(\overline \delta) > k_1 +
\epsilon$. Hence
\[
\tau_p > k_p + p\,\epsilon >k_p + P,
\]
contradicting $(\ref{io})$.

Assume that $\pi_1(M)$ is infinite abelian and $\pi_1(M) \neq \Z$. Let $t\in \pi_1(M)$
be of infinite order and let $s\in  \pi_1(M)$ such that $s,t$ are independent. We
denote by $\Lambda_m$ the $t^m$ component of $\Lambda$. Denote $k_m=\inf \{ L(\delta): \delta \in \Lambda_m \}$. As in \cite{bh} we may prove
that there exist  homotopy classes $\alpha_m \in \pi_1 (\Lambda_m)$ such that
\begin{itemize}
\item $e_* (\alpha_m)=s$;
\item $\tau^2_m= \inf_{f\in \alpha_m}\{ \sup E(\delta) : \delta \in {\rm Im}f \}$ satisfies
$\tau_m \leq m\, k_1 +P$, where $P$ is a positive constant independent of $m$.
\end{itemize}
If $\tau_m=k_m$ for a certain $m\in\N$, as in Lemma \ref{lemma} we can show that if there is only a finite number of distinct closed geodesics, then $\alpha_m$ admits a representative that lies in an $S^1$ orbit of some geodesic of which its homotopy class lies in the
infinite abelian subgroup of $\pi_1(M)$ generated from $t$
which is an absurd since $e_*(\alpha_m)=s$.

Assume now $\tau_m > k_m$ for every $m\in\N$. Let $\gamma_p$
be a closed geodesic in $\Lambda_p$ with length $k_p$ and let
$\delta_p$ be a closed geodesic such that $L(\delta_p)=\tau_p$. As before,
we may assume that there do not exist closed geodesics in $\Lambda_1$
with length $(k_1,k_1+\epsilon]$.

Let $p$ be a prime number such that $p\,\epsilon>P$.
If the  multiplicity of both $\gamma_p$ and $\delta_p$ is bigger than $1$
then $\gamma_p=\overline{\gamma}^p$ and $\delta_p=\overline{\delta}^p$.
In particular since $k_p \leq p\, k_1$ and $L(\gamma_p)= p\, L(\overline\gamma ) \leq p\, k_1$ we have that
$L(\overline\gamma )=k_1$, $k_p=p\, k_1$ and $L(\overline\delta )> k_1$. Therefore
$L(\overline\delta )> k_1 + \epsilon$ and
\[
\tau_p=L(\delta_p)=p\, L(\bar \delta)>p(\,k_1+\epsilon)> p\, k_1 + P,
\]
which is a contradiction. Then, as before, there are many infinite geometrically distinct closed geodesics in $M$.
\end{proof}

Finally we state the result about the multiplicity of $t$-periodic light rays that follows from Theorem \ref{bangerthingston} and Fermat's principle.
\begin{thm}\label{bhstationary}
Let $(M\times\R,l)$ be a standard stationary spacetime with the metric $l$ as in \eqref{stationarymetric}. If $M$ is compact and its fundamental group is infinite abelian, then there exist infinitely many non-equivalent $t$-periodic light rays in $(M\times\R,l)$.
\end{thm}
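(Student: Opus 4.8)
The plan is to deduce the statement from Theorem~\ref{bangerthingston} applied to the Fermat metric, transporting closed Finsler geodesics into $t$-periodic light rays by means of Fermat's principle. First, note that $F$ in \eqref{fermatmetric} is a Finsler metric on the compact manifold $M$ (see \cite{cmm}; strong convexity of $F^2$ uses $\beta>0$), and that it meets the hypotheses of Theorem~\ref{bangerthingston}: $\pi_1(M)$ is infinite abelian by assumption, and we take $\dim M\geq2$ (the case $\dim M=1$, i.e.\ $M\cong S^1$, lying outside the scope of the statement). Hence Theorem~\ref{bangerthingston} furnishes a sequence $(x_i)_{i\in\N}$ of non-trivial closed geodesics of $(M,F)$ with pairwise distinct supports in $M$.

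Next I would lift each $x_i$ to a light ray. Choosing the affine parametrization on $[0,1]$ we have $x_i(0)=x_i(1)$, $\dot x_i(0)=\dot x_i(1)$ and $F(x_i(s),\dot x_i(s))\equiv c_i$, with $c_i=L(x_i)>0$. Setting $t_i(s):=t_0+\int_0^s F(x_i,\dot x_i)\,\df\sigma=t_0+c_i s$, Fermat's principle (recalled above for the Fermat metric) gives that $\gamma_i:=(x_i,t_i):[0,1]\onto M\times\R$ is a future-pointing lightlike geodesic of $(M\times\R,l)$. Since $x_i$ is a smooth closed curve, $t_i(1)=t_i(0)+c_i$ and $\dot t_i(1)=c_i=\dot t_i(0)$, the light ray $\gamma_i$ is $t$-periodic (indeed $c_i$-periodic).

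It then remains to check that the $\gamma_i$ are pairwise non-equivalent. If $\gamma_i$ and $\gamma_j$ were equivalent, their images in $M\times\R$ would coincide up to a constant translation in the time component; projecting onto $M$ removes the translation, so the images of $x_i$ and $x_j$ would coincide, contradicting that $x_i$ and $x_j$ have distinct supports. Therefore $(\gamma_i)_{i\in\N}$ is an infinite family of pairwise non-equivalent $t$-periodic light rays in $(M\times\R,l)$, which proves the theorem.

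The substantive analytic and topological content is contained in Theorem~\ref{bangerthingston}; here the only points that require care are the verification that the prescribed time component $t_i$ makes $\gamma_i$ genuinely $t$-periodic — which reduces to the constancy of $F$ along an affinely parametrized geodesic, hence the periodicity of $\dot t_i$ — together with the (essentially formal) passage from ``geometrically distinct'' closed Finsler geodesics to ``non-equivalent'' light rays. Neither of these presents a real obstacle, so I expect the only delicate step to have been absorbed into the proof of Theorem~\ref{bangerthingston} itself.
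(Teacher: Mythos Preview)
Your proposal is correct and follows exactly the route the paper takes: the paper's own proof is the single sentence that the result ``follows from Theorem~\ref{bangerthingston} and Fermat's principle,'' and you have simply spelled out the details of that implication. Your explicit check that the lifted curves are $t$-periodic and that geometrically distinct Fermat geodesics yield non-equivalent light rays, together with your remark on the implicit hypothesis $\dim M\geq 2$, adds precision without departing from the paper's argument.
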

\section{Spacetimes with a finite number of $t$-periodic light rays}\label{katok}
\subsection{Fermat metrics, Randers metrics and  Zermelo metrics}\label{subsection:FeRaZer}
In the following, we want to show that the family of Fermat metrics coincides with another two families of Finsler metrics and we will try to get the most information of this fact. We recall that a Fermat metric is given by Eq. \eqref{fermatmetric}. Randers metrics are the most classical example of non-reversible Finsler metrics and they were introduced by G. Randers aiming to study electromagnetics trajectories in spacetimes (see \cite{Rander41}).  These metrics are determined by a Riemannian metric $h$ and a $1$-form $\omega$ in a manifold $M$  as
\begin{equation}\label{randers}
R(x,v)=\sqrt{h(v,v)}+\omega(x)[v],
\end{equation}
where $x\in M$ and $v\in T_xM$ and
such that $|\omega|_x<1$ in every $x\in M$ ($|\cdot|_x$ computed using $h$). The last condition in $\omega$ implies the positivity and fiberwise strongly convex square of the Randers metric $R$ (see \cite{bcs}).
On the other hand, Zermelo metrics were introduced in $\R^2$ by Zermelo (see \cite{Ze31}) to study a problem of navigation and they were generalized by Z. Shen in \cite{Sh03} as follows: given a Riemannian manifold
$(M,g)$ and a vector
field $W$ in $M$ such that $g(W,W)<1$, the Zermelo metric $Z:TM\rightarrow \R$ is given by
\begin{equation}\label{zermelo}
Z(x,v)=\sqrt{\frac{1}{\alpha^2}g(W,v)^2+\frac{1}{\alpha}g(v,v)}-\frac{1}{\alpha}g(W,v),
\end{equation}
where $x\in M$, $v\in T_xM$ and $\alpha=1-g(W,W)$ (along this section we omit the point of evaluation in $M$ to avoid mess). 
\begin{prop}\label{prop:RZF}
Randers, Zermelo and Fermat metrics provide the same family of Finsler metrics.
\end{prop}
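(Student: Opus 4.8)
The plan is to write down explicit dictionaries between the three sets of data so that each of the formulas \eqref{fermatmetric}, \eqref{randers} and \eqref{zermelo} becomes a special case of, and specializes to, each of the other two; since each of the three families is a set of Finsler metrics on $M$, it is enough to establish a cycle of inclusions, say $\{\text{Fermat}\}\subseteq\{\text{Randers}\}\subseteq\{\text{Zermelo}\}\subseteq\{\text{Fermat}\}$. The only computations involved are completing the square inside the square roots and inverting a rank-one perturbation of a Riemannian metric (Sherman-Morrison), together with checking that each admissibility hypothesis ($\beta>0$, $|\omega|_x<1$, $g(W,W)<1$) matches the next.

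For the first inclusion, set $\delta^{\flat}=g_0(\delta,\cdot)$; then $\tfrac{1}{\beta}\sqrt{g_0(\delta,v)^2+\beta g_0(v,v)}=\sqrt{h(v,v)}$ with $h=\tfrac{1}{\beta^2}\,\delta^{\flat}\!\otimes\delta^{\flat}+\tfrac{1}{\beta}g_0$, which is Riemannian as the sum of $\tfrac1\beta g_0$ and a positive semidefinite term, so \eqref{fermatmetric} becomes the Randers metric $F=\sqrt{h(v,v)}+\omega(v)$ with $\omega=\tfrac{1}{\beta}\delta^{\flat}$. Computing $h^{-1}$ by Sherman-Morrison yields $|\omega|_h^2=\tfrac{g_0(\delta,\delta)}{\beta+g_0(\delta,\delta)}<1$, so the metric is an admissible Randers metric. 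For the second inclusion one recalls the classical Zermelo navigation correspondence: from Randers data $(h,\omega)$ put $\lambda=1-|\omega|_h^2\in(0,1]$, $g=\lambda(h-\omega\otimes\omega)$ (positive definite because $h-\omega\otimes\omega\ge\lambda h$) and let $W$ be the $g$-dual of $-\lambda\omega$; completing the square in \eqref{zermelo} for this $(g,W)$ reproduces $\sqrt{h(v,v)}+\omega(v)$, and one checks $g(W,W)=1-\lambda<1$, so the Zermelo data is admissible.

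Finally, to close the cycle I would pass from Zermelo to Fermat directly: given $(g,W)$ with $\alpha=1-g(W,W)>0$, the Fermat data $(g_0,\beta,\delta)=(g,\alpha,-W)$ has $\beta=\alpha>0$, and substituting into \eqref{fermatmetric} together with the identity $\tfrac1\beta\sqrt{g(W,v)^2+\beta g(v,v)}=\sqrt{\tfrac1{\beta^2}g(W,v)^2+\tfrac1\beta g(v,v)}$ gives back exactly \eqref{zermelo}; in the other direction Fermat data $(g_0,\beta,\delta)$ corresponds to $g=\tfrac{g_0}{\beta+g_0(\delta,\delta)}$, $W=-\delta$. There is no deep obstacle here: the part needing care is purely the bookkeeping — verifying that the three convexity/positivity conditions are genuinely equivalent under these substitutions, and noting that the Fermat parametrization is redundant, being invariant under $(g_0,\beta,\delta)\mapsto(f g_0,f\beta,\delta)$ for any positive function $f$, so that the correspondences become bijections only after normalizing, whereas the three families of Finsler metrics on $M$ coincide as they stand.
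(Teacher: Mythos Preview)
Your proof is correct and follows essentially the same strategy as the paper: write down explicit substitutions between the data $(g_0,\beta,\delta)$, $(h,\omega)$ and $(g,W)$ and check the admissibility conditions. The only cosmetic difference is that the paper establishes Zermelo $\leftrightarrow$ Randers (citing \cite{BaRoSh04}) and then Fermat $\leftrightarrow$ Zermelo directly, whereas you traverse the triangle as Fermat $\to$ Randers $\to$ Zermelo $\to$ Fermat; your Sherman--Morrison step to verify $|\omega|_h<1$ and your remark on the conformal redundancy of the Fermat data are both correct and match what the paper does (the latter appears just after the proof).
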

\begin{proof}
The fact that Zermelo metrics are Randers comes easily. As it was observed in \cite{BaRoSh04}, a Randers metric can be expressed as a Zermelo metric by taking
\begin{equation*}
g(v,w)=\varepsilon \big(h(v,w)-h(B,v)h(B,w)\big)
\end{equation*}
and $W=-B/\varepsilon$, where $\varepsilon=1-h(B,B)$ and $B$ is the vector field metrically associated by $h$ to $\omega$, that is, $\omega[\cdot]=h(B,\cdot)$. The next step is to show that Fermat metrics provide the same family. Indeed
 it is enough to take $W=-\delta$ and
$g(\cdot,\cdot)=g_0(\cdot,\cdot)/(\beta+|\delta|_0^2)$; here $|\cdot|_0$ denotes the associated norm to $g_0$.
Then
$g(W,W)=|\delta|^2_0/(\beta+|\delta|^2_0)<1$
and
$\alpha=\beta/(\beta+|\delta|_0^2),$
so that
$g_0(\cdot,\cdot)/\beta=g(\cdot,\cdot)/\alpha.$
From the last equality it is easily concluded that a Fermat metric
is Zermelo. For the viceverse we can consider $\delta=-W$,
$\beta=(1-g(W,W))/\phi$ and
$g_0(\cdot,\cdot)=1/\phi\, g(\cdot,\cdot)$, where $\phi$ is an
arbitrary positive real function in $M$ that can be choosen in
particular constantly equal to $1$. Clearly
$g(\cdot,\cdot)/\alpha=g_0(\cdot,\cdot)/\beta,$
and a Zermelo metric is always Fermat.
\end{proof}
 We observe that Zermelo and Randers metrics are in one-to-one correspondence. The matter is different with Fermat metrics because they are not determined by the elements $g_0$, $\beta$ and $\delta$. More exactly a Fermat metric only depends on $g_0/\beta$ and $\delta$, so that $\beta$ is defined up to a positive function $\phi:M\rightarrow \R$. This degree of freedom is associated with the fact that the support of lightlike geodesics remains unchanged by conformal changes in the spacetime. From the above discussion it is easily deduced that we can associate to every Randers metric as in \eqref{randers} the standard stationary spacetime determined by $\beta=(1-h(B,B))/\phi$, $\delta=B/(1-h(B,B))$ and
\[g_0(v,w)=\varepsilon/\phi\, \big(h(v,w)-h(B,v)h(B,w)\big).
\]
\subsection{Katok examples} Zermelo metrics play a fundamental role in the classification of Randers space forms. They provide a geometric characterization of Randers metrics with constant flag curvature as it was shown in \cite{BaRoSh04}.  We observe that Zermelo metrics are also related to Katok examples. These examples (see \cite{katok73,Ziller83}) are provided by a co-Finsler metric of Randers type, that is, a Finsler metric on the cotangent bundle $H:T^*M\rightarrow \R$ given by
\begin{equation}\label{coranders}
H(x,v)=\sqrt{g^*(v,v)}+v(W),
\end{equation}
for all $(x,v)\in T^*M$, being $g^*$ the dual metric of a Riemannian metric $g$ in $M$ and $W$  a vector field in $M$ such that $g(W,W)<1$. There is a one-to-one correspondence between Finsler and co-Finsler metrics, the so-called $\mathcal L$-duality given by the Legendre transformation, so that the co-Randers metric in \eqref{coranders} determines a Finsler metric. As it was proved in \cite{HriShi96} (see also \cite{Shen01}), this metric is in fact of Randers type, more precisely, it is the Zermelo metric determined by the metric $g$ and the vector field $W$. When the vector field $W$ is Killing, the Hamiltonian properties of the geodesic flow can be used to study the closed geodesics associated to \eqref{coranders}, obtaining in this way Katok examples with a finite number of closed geodesics (see \cite{Ziller83} for an account of the geometric properties of Katok examples). Thus these examples are of Zermelo type and they can be easily expressed as Fermat metrics providing examples of spacetimes with a finite number of $t$-periodic light rays. We observe that the geometry of Katok examples has been approached directly with the Zermelo metric by C. Robles in \cite{robles07}. In this work the author obtains an explicit expression for geodesics in a class of Zermelo metrics described in the following theorem.
\begin{thm}[Robles \cite{robles07}]\label{robles} Assume that $(\mathcal{M},g)$ is equipped with an infinitesimal homothety
$W$, i.e. $\mathcal{L}_W g=\sigma g$, where $\sigma$ is constant and $\mathcal{L}$ is the Lie derivative. Let $Z$
be the Zermelo metric given by \eqref{zermelo}
defined on $M=\{x\in M : g(W,W) <1 \}\subset \mathcal{M}$. The unit
speed geodesics $\mathcal{P}:(-\epsilon, \epsilon) \onto M$ of $Z$
are given by $\mathcal{P}(t)=\phi(t,\rho(t))$, where
\begin{itemize}
\item $\rho: (-\epsilon, \epsilon)  \onto \mathcal{M}$ is a geodesic of $g$
parametrized so that $g(\fis{\rho}(t),\fis{\rho}(t))= e^{-\sigma
t}$;
\item shrinking $\epsilon$ if necessary, $\phi:(-\epsilon, \epsilon) \times U \onto M$
is the flow of $W$ defined on a neighborhood $U$ of $\rho(0)$ so
that $\rho(t)\in U$, for all $t\in (-\epsilon, \epsilon)$.
\end{itemize}
\end{thm}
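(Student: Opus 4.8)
The plan is to exploit the Zermelo navigation description of $Z$. By Proposition~\ref{prop:RZF}, $Z$ is the Randers metric with navigation data $(g,W)$; equivalently, $Z(x,v)$ is the unique positive number with $g\bigl(v/Z(x,v)-W(x),\,v/Z(x,v)-W(x)\bigr)=1$, so that the $Z$-unit vectors at $x$ are precisely those of the form $u+W(x)$ with $g(u,u)=1$, and a $Z$-unit-speed curve is the trajectory of a vessel carried by the current $W$ while steering at unit $g$-speed relative to the medium. I will also use that, $\sigma$ being constant, $\mathcal{L}_Wg=\sigma g$ integrates to $\phi_t^{*}g=e^{\sigma t}g$. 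The proof then splits into showing that the curves $\mathcal{P}(t)=\phi(t,\rho(t))$ of the statement are $Z$-unit-speed, and that they exhaust the $Z$-geodesics.

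The first point is a short computation. Differentiating $\mathcal{P}(t)=\phi(t,\rho(t))$ and using $\partial_t\phi(t,q)=W(\phi(t,q))$ gives $\dot{\mathcal{P}}(t)-W(\mathcal{P}(t))=(d\phi_t)_{\rho(t)}\dot\rho(t)$, whence $g_{\mathcal{P}(t)}\bigl(\dot{\mathcal{P}}-W,\dot{\mathcal{P}}-W\bigr)=(\phi_t^{*}g)_{\rho(t)}(\dot\rho,\dot\rho)=e^{\sigma t}\,g(\dot\rho,\dot\rho)$, which equals $1$ exactly when $g(\dot\rho(t),\dot\rho(t))=e^{-\sigma t}$ --- the normalization in the statement. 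Running the same identity with $\le1$ instead of $=1$ shows that the change of variables $c(t)\mapsto\gamma(t):=\phi_{-t}(c(t))$ turns an arbitrary admissible vessel path $c$ (that is, $\dot c=u+W$ with $g(u,u)\le1$) into a base curve with $g(\dot\gamma(t),\dot\gamma(t))\le e^{-\sigma t}$, and conversely.

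For the second point I will use that $Z$-geodesics are exactly the locally time-optimal vessel paths. Heuristically, reparametrising the still-water picture above by $s(t)=\int_0^t e^{-\sigma\tau/2}\,\df\tau$ converts the bound $g(\dot\gamma,\dot\gamma)\le e^{-\sigma t}$ into the usual Riemannian navigation constraint $g(d\gamma/ds,d\gamma/ds)\le1$, so a time-optimal transit is realised by a $g$-geodesic traversed at unit $g$-speed in $s$; transporting it back by $\phi_t$ (which rescales $g$-lengths by $e^{\sigma t/2}$) produces precisely $\mathcal{P}(t)=\phi(t,\rho(t))$ with $\rho$ a $g$-pregeodesic satisfying $g(\dot\rho,\dot\rho)=e^{-\sigma t}$. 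To make this airtight I will pass to $T^{*}M$: by the $\mathcal{L}$-duality recalled before the statement (see \cite{HriShi96}), the co-metric dual to $Z$ is $H(x,\xi)=\sqrt{g^{*}(\xi,\xi)}+\xi(W)=:H_0+H_1$, so the unit-speed $Z$-geodesics are the projections to $M$ of the integral curves $c(t)$ of the Hamiltonian field $X_H=X_{H_0}+X_{H_1}$ on $\{H=1\}\subset T^{*}M\setminus 0$, where $X_{H_1}$ generates the cotangent lift $\hat\phi_t$ of the flow of $W$. The drift term is removed by conjugation: $d(t):=\hat\phi_{-t}(c(t))$ satisfies $\dot d(t)=\bigl((\hat\phi_{-t})_{*}X_{H_0}\bigr)(d(t))$, and since $\phi_t^{*}g^{*}=e^{-\sigma t}g^{*}$ gives $H_0\circ\hat\phi_t=e^{-\sigma t/2}H_0$, one gets $(\hat\phi_{-t})_{*}X_{H_0}=e^{-\sigma t/2}X_{H_0}$; thus $d$ is a $g$-co-geodesic traversed in the time $s(t)=\int_0^t e^{-\sigma\tau/2}\,\df\tau$. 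Projecting $c(t)=\hat\phi_t(d(t))$ back to $M$ then yields $\mathcal{P}(t)=\phi\bigl(t,\rho(t)\bigr)$ with $\rho=\pi\circ d$ a $g$-geodesic of $g$-speed $e^{-\sigma t/2}$, which together with the first point gives the claimed description.

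The hard part is the moving-endpoint issue in the naive navigation argument: letting the transit time vary makes the target of the still-water problem slide along the $\phi_{-t}$-orbit of $\mathcal{P}(T)$, so one must argue --- by a first-contact argument on a short time interval, where the $g$-exponential map is a diffeomorphism --- that the time-optimal still-water path is nonetheless a minimizing $g$-geodesic to the sliding target. This is precisely what the Hamiltonian route avoids, the only cost there being to set up the Legendre $\mathcal{L}$-duality with care and to verify the scaling $H_0\circ\hat\phi_t=e^{-\sigma t/2}H_0$.
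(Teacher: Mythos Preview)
The paper does not prove this theorem: it is stated with attribution to Robles \cite{robles07} and used as a black box to derive Corollary~\ref{fermatgeodesics}, so there is no ``paper's own proof'' to compare against.

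That said, your argument is sound and is a clean way to establish the result. The unit-speed check via $\phi_t^{*}g=e^{\sigma t}g$ is correct, and the Hamiltonian part is the right mechanism: the $\mathcal L$-duality invoked in the paper (just before the theorem) gives the co-metric $H=H_0+H_1$ with $H_1(\xi)=\xi(W)$; the flow of $X_{H_1}$ is the cotangent lift $\hat\phi_t$; and the conjugation $d(t)=\hat\phi_{-t}(c(t))$ removes the drift because $(\hat\phi_{-t})_*X_{H_1}=X_{H_1}$ and, since $\hat\phi_t$ is symplectic and $H_0\circ\hat\phi_t=e^{-\sigma t/2}H_0$, one has $(\hat\phi_{-t})_*X_{H_0}=X_{H_0\circ\hat\phi_t}=e^{-\sigma t/2}X_{H_0}$. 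Hence $d$ is an integral curve of $X_{H_0}$ in the time $s(t)=\int_0^t e^{-\sigma\tau/2}\,\df\tau$, its projection $\rho=\pi\circ d$ is a $g$-geodesic of speed $e^{-\sigma t/2}$ in $t$, and $\mathcal P=\pi\circ c=\phi_t\circ\rho$ as claimed. Two small points worth stating explicitly in a final write-up: on $\{H=1\}$ one has $X_{\frac12 H^2}=H\,X_H=X_H$, which is what legitimises replacing the geodesic Hamiltonian $\tfrac12 H^2$ by $H$; and ``geodesic of $g$'' in the statement means pregeodesic, since the prescribed speed $e^{-\sigma t/2}$ is not affine when $\sigma\neq 0$.

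For comparison, Robles' original proof in \cite{robles07} proceeds by writing down the Randers geodesic equations (via the Chern connection and the spray coefficients of a Randers metric) and verifying directly that $\phi(t,\rho(t))$ satisfies them under the homothety hypothesis. Your Hamiltonian route is more conceptual and avoids those computations entirely; it also makes transparent why the result is exactly the Killing case $\sigma=0$ (geodesics drift rigidly with the wind) plus a time-reparametrisation governed by the dilation factor $e^{-\sigma t/2}$.
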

This result can be easily adapted to the stationary context.
\begin{cor}\label{fermatgeodesics} Assume that $(M\times\R,l)$ is a standard stationary spacetime as in \eqref{stationarymetric} and
\[\mathcal L_{\delta}g_0=\left(\frac{ \delta(\beta+|\delta|_0^2)}{\beta+|\delta|_0^2}-\sigma\right)g_0\]
for a certain $\sigma\in \R$.  The spacelike component of light rays is given up to reparametrizations as   $x(t)=\phi(t,\rho(t))$, where
\begin{itemize}
\item $\rho: (-\epsilon, \epsilon)  \onto M$ is a geodesic of $1/(\beta+|\delta|_0^2)g_0$
parametrized so that \[g_0(\fis{\rho}(t),\fis{\rho}(t))=(\beta(\rho(t))+|\delta(\rho(t))|_0^2) e^{-\sigma
t};\]
\item shrinking $\epsilon$ if necessary, $\phi:(-\epsilon, \epsilon) \times U \onto M$
is the flow of $-\delta$ defined on a neighborhood $U$ of $\rho(0)$ so
that $\rho(t)\in U$, for all $t\in (-\epsilon, \epsilon)$.
\end{itemize}
\end{cor}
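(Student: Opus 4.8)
The plan is to deduce the statement from Theorem \ref{robles} via the identification of the Fermat metric with a Zermelo metric established in Proposition \ref{prop:RZF}. Recall from that proposition that the Fermat metric \eqref{fermatmetric} coincides with the Zermelo metric \eqref{zermelo} associated with the Riemannian metric $g=\frac{1}{\beta+|\delta|_0^2}\,g_0$ and the vector field $W=-\delta$ (so that $\alpha=1-g(W,W)=\frac{\beta}{\beta+|\delta|_0^2}$). First I would invoke Fermat's principle: the spacelike component of a light ray of $(M\times\R,l)$ is, up to reparametrization, a geodesic of the Fermat metric, hence a geodesic of this Zermelo metric $Z$. Therefore it suffices to describe the unit-speed $Z$-geodesics and then compose with the reparametrization relating the light ray to its associated Fermat geodesic, which will produce the ``up to reparametrization'' in the conclusion.

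Next I would check that the curvature hypothesis in the statement is precisely the translation of the homothety condition $\mathcal L_W g=\sigma g$ required by Theorem \ref{robles}. Writing $f=\frac{1}{\beta+|\delta|_0^2}$ and using the Leibniz rule for the Lie derivative, $\mathcal L_W g=\mathcal L_{-\delta}(f g_0)=-(\delta f)\,g_0-f\,\mathcal L_\delta g_0$; since $\delta f=-f^2\,\delta(\beta+|\delta|_0^2)$, this equals $f\big(\frac{\delta(\beta+|\delta|_0^2)}{\beta+|\delta|_0^2}\,g_0-\mathcal L_\delta g_0\big)$. Hence $\mathcal L_W g=\sigma g=\sigma f g_0$ if and only if $\mathcal L_\delta g_0=\big(\frac{\delta(\beta+|\delta|_0^2)}{\beta+|\delta|_0^2}-\sigma\big)g_0$, which is exactly the assumed identity. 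Under this hypothesis $(M,g)$ carries the infinitesimal homothety $W=-\delta$ with constant $\sigma$, and the domain restriction $g(W,W)<1$ of Theorem \ref{robles} is automatic here, since $g(W,W)=\frac{|\delta|_0^2}{\beta+|\delta|_0^2}<1$ because $\beta>0$.

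Finally I would apply Theorem \ref{robles} verbatim: the unit-speed geodesics of $Z$ are $\mathcal P(t)=\phi(t,\rho(t))$, where $\phi$ is the (local) flow of $W=-\delta$ and $\rho$ is a $g$-geodesic parametrized so that $g(\dot\rho(t),\dot\rho(t))=e^{-\sigma t}$. A curve is a $g$-geodesic exactly when it is a geodesic of $\frac{1}{\beta+|\delta|_0^2}g_0$, and $g(\dot\rho(t),\dot\rho(t))=\frac{1}{\beta(\rho(t))+|\delta(\rho(t))|_0^2}\,g_0(\dot\rho(t),\dot\rho(t))$, so the normalization becomes $g_0(\dot\rho(t),\dot\rho(t))=(\beta(\rho(t))+|\delta(\rho(t))|_0^2)\,e^{-\sigma t}$, precisely as in the statement; the shrinking of $\epsilon$ and the condition $\rho(t)\in U$ are inherited from Theorem \ref{robles}. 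Combining with the first paragraph, the spacelike component $x(t)$ equals $\phi(t,\rho(t))$ up to reparametrization, as claimed. The only delicate points are bookkeeping ones: tracking the conformal factor $f$ and the sign coming from $W=-\delta$ in the Lie-derivative computation, and keeping in mind that the Fermat/Zermelo correspondence determines the spacelike projection of a light ray only up to reparametrization, so the conclusion must — and does — retain that qualifier.
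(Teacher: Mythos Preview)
Your argument is correct and is exactly the adaptation the paper has in mind: the paper gives no explicit proof of this corollary, merely introducing it as an immediate translation of Theorem~\ref{robles} to the stationary setting via the Fermat/Zermelo identification of Proposition~\ref{prop:RZF}, and you have carried out precisely that translation with the correct bookkeeping (the conformal factor $f=1/(\beta+|\delta|_0^2)$, the sign from $W=-\delta$, and the reparametrization of $\rho$). One cosmetic remark: you call the Lie-derivative identity in the statement a ``curvature hypothesis,'' but it is an infinitesimal-homothety condition, not a curvature condition; the mathematics is unaffected.
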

Let us describe now Katok examples. We consider a compact Riemannian manifold $(M,g)$ whose all geodesics close and they have the same minimal period that we assume equal to $2\pi$ and a closed  one-parameter subgroup of isometries  with the same minimal period $2\pi$. Then consider the Zermelo metric $Z_\alpha$ constructed with $g$ as above and $W=\alpha V$, being $\alpha\in\R$ arbitrary and $V$ the Killing vector field associated to the one-parameter subgroup that we have fixed. Clearly, if $\alpha$ is small enough, $g(W,W)<1$. The key result is that if $\alpha$ is irrational then the only closed geodesics of the Zermelo metric $Z_\alpha$ are reparametrizations of the geodesics of $(M,g)$ invariant by the one-parameter subgroup, which are a finite number in many cases. We will not describe here how to find the one-parameter subgroups and to count the number of closed geodesics, we remit the interested reader to the discussion in \cite{Ziller83}.
\begin{prop}
Let $(M,g_0)$ be one of the Riemannian manifolds $S^{2n}$, $S^{2n-1}$, $P^nC$, $P^nH$ and $P^2Ca$ endowed with the standard metrics. Then there exists a Killing field $V$ such that if we take $\delta=-\alpha V$ and $\beta=1-\alpha^2 g_0(V,V)$, with $\alpha$ a small enough irrational number, then the standard stationary spacetime \eqref{stationarymetric} has exactly $2n$, $2n$, $n(n+1)$, $2n(n+1)$ and $24$ $t$-periodic lightlike geodesics respectively.
\end{prop}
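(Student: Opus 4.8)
The plan is to reduce the statement, via Fermat's principle, to counting the geometrically distinct closed geodesics of the Fermat metric attached to \eqref{stationarymetric}, and then to recognize that this Fermat metric is precisely one of the Katok metrics, whose closed geodesics were classified by Ziller. First I would recall from \cite{cmm} and Subsection~\ref{subsection:FeRaZer} that the non-equivalent $t$-periodic light rays of the spacetime \eqref{stationarymetric} correspond bijectively to the geometrically distinct closed geodesics of the Fermat metric \eqref{fermatmetric}: a $t$-periodic light ray projects to a closed $F$-pregeodesic, and conversely each closed $F$-geodesic lifts to a $t$-periodic light ray, unique up to a time translation. Hence it suffices to count closed $F$-geodesics with distinct support.

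Next I would carry out the identification of $F$ with a Katok metric. With the prescribed data $\delta=-\alpha V$ and $\beta=1-\alpha^2 g_0(V,V)$ one has $\beta+|\delta|_0^2=1$; since $M$ is compact and $\alpha$ is taken small, both $\beta>0$ (so \eqref{stationarymetric} is a genuine standard stationary Lorentzian metric) and $g_0(\alpha V,\alpha V)<1$ hold. By the correspondence in the proof of Proposition~\ref{prop:RZF}, the Fermat metric of the spacetime \eqref{stationarymetric} is the Zermelo metric \eqref{zermelo} determined by the Riemannian metric $g=g_0/(\beta+|\delta|_0^2)=g_0$ and the vector field $W=-\delta=\alpha V$; a one-line substitution checks that \eqref{fermatmetric} and \eqref{zermelo} coincide in this case. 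Thus $F$ is exactly the Katok metric $Z_\alpha$ built from $(M,g_0)$ and the Killing field $W=\alpha V$.

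Then I would invoke the Katok phenomenon. For each of the spaces $S^{2n}$, $S^{2n-1}$, $P^nC$, $P^nH$, $P^2Ca$ ---the compact rank one symmetric spaces--- the standard metric $g_0$ (rescaled by a harmless constant if necessary, which does not change geodesics or the supports of the light rays) has all geodesics closed with common minimal period $2\pi$, and each of these spaces carries a closed one-parameter group of isometries of minimal period $2\pi$; I take $V$ to be its infinitesimal generator. Then, as in \cite{katok73,Ziller83}, when $\alpha$ is irrational the only closed geodesics of $Z_\alpha$ are, up to reparametrization, the $g_0$-geodesics invariant under the flow of $V$ ---that these are indeed closed $Z_\alpha$-geodesics also follows from Corollary~\ref{fermatgeodesics}, with $\sigma=0$, since the flow of $W$ fixes their image--- and for the one-parameter groups chosen in \cite{Ziller83} the number of these, counted up to the natural $S^1$-action, is $2n$, $2n$, $n(n+1)$, $2n(n+1)$ and $24$ respectively. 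Transporting this count back through Fermat's principle yields the stated number of non-equivalent $t$-periodic light rays.

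The main obstacle is the word ``exactly'': the nontrivial input is that irrationality of $\alpha$ forces the closed geodesics of $Z_\alpha$ to be only the $V$-invariant ones ---this is the core of Katok's construction and relies on the completely integrable Hamiltonian structure of the geodesic flow of a compact rank one symmetric space together with the symmetry generated by $V$--- and that for a suitable choice of one-parameter subgroup the invariant geodesics are finite in number and add up to precisely the values above. Both facts are established in \cite{Ziller83}, so in the write-up this step amounts to a careful citation rather than new work; the only genuinely self-contained part is the elementary identification of $F$ with $Z_\alpha$ in the second paragraph, which is forced by the specific choice of $\beta$ making $\beta+|\delta|_0^2\equiv 1$.
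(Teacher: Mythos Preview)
Your proposal is correct and follows essentially the same route as the paper: reduce via Fermat's principle to closed geodesics of the Fermat metric, identify this metric with a Katok/Zermelo metric through Proposition~\ref{prop:RZF}, and cite \cite{Ziller83} for the exact counts. The paper's proof is terser and phrases the identification in the opposite direction (starting from Ziller's co-Randers metrics and passing through $\mathcal L$-duality to Zermelo, then to Fermat), but your explicit observation that the choice of $\beta$ forces $\beta+|\delta|_0^2\equiv 1$, hence $g=g_0$ and $W=\alpha V$, is exactly the computation underlying that identification.
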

\begin{proof}
Apply the results in \cite[page 139]{Ziller83} to obtain Finsler metrics with a finite number of closed geodesics. We know because of the $\mathcal L$-duality that these metrics are Zermelo metrics (as we have shown at the beginning of this subsection), so that they can be easily expressed as  Fermat metrics of  stationary spacetimes as in Proposition \ref{prop:RZF}.
\end{proof}
\subsection{A bound for the period $T$ and reversibility of a Fermat metric.}\label{reversibility} In the paper \cite{sanchezproc99} it was shown that given a manifold $M$ whose homotopy group is non trivial, and given a non trivial homotopy class $\mathcal C$, there exists $T_{\mathcal C}>0$ such that if $T<T_{\mathcal C}$ there is no $T$-periodic causal geodesic in a standard stationary spacetime $(M\times\R,l)$ with projection in $M$ belonging to $\mathcal C$, if $T=T_{\mathcal C}$ there exists a $T$-periodic light ray, and if $T>T_{\mathcal C}$ there exists a $T$-periodic timelike geodesic. When the homotopy class is trivial the causal methods in \cite{sanchezproc99} do not apply, but we can use Finsler geometry to show that $T$ can not be arbitrarily small. We recall that the period $T$ coincides with the length of the Fermat geodesic. As the injectivity radius is continuous (see \cite[Proposition 8.4.1]{bcs}), the length of a closed geodesic must be bounded from below in compact manifolds and so the period $T$ of $t$-periodic light rays in the corresponding stationary spacetimes. In the paper \cite{rade04}, H-B. Rademacher finds a bound for the length of a geodesic loop when the curvature is $\eta$-pinched. The $\eta$ depends on the reversibility of the Finsler metric, so that in order to apply Rademacher's result to Fermat metrics we proceed to compute its reversibility.

For a compact non-reversible Finsler manifold $(M,F)$ we define
the \emph{reversibility} $\lambda=\max \{F(-X): X\in TM\, \text{\rm and } F(X)=1 \}$. Let $F$ be the Fermat metric given in \eqref{fermatmetric} and
set $p\in M$. We determine the critical points of the function
\[
f: T_p M \rightarrow \R, \ f(v)=F(-v)=1-2g_1(\delta(p),v),
\]
on $F(v)=1$,  where $g_1(\cdot, \cdot)=1/\beta\,
g_0(\cdot, \cdot)$.

Assume firstly $\delta(p)\neq 0$. It is easy to see that $\nabla
f= -2\delta$ and
\[ \nabla F= \delta + \frac{ g_1(\delta,v)\,\delta +
v}{\sqrt{g_1(\delta,v)^2+ g_1(v,v)}}
\]
with respect to $g_1$. By applying Lagrange multipliers method we deduce that $v$ is a critical point of $f$ on
$F(v)=1$ if and only if $v=\mu\, \delta$ and $F(v)=1$. Hence
\[
|\mu|\sqrt{|\delta|^2 + |\delta|^4} + \mu|\delta|^2=1,
\]
where $|\cdot |$ is the norm associated to $g_1$. Therefore we get two values of
$\mu$: one positive and the other one negative. Since
$f(v)=1-2g_1(v,\delta)$, the maximum corresponds to $\mu$
negative, that is
\[
\mu=\frac{1}{| \delta(p) |^2 - \sqrt{|\delta(p)|^2+
|\delta(p)|^4}},
\]
hence
\[
f(\mu\, \delta)= \frac{|\delta| +
\sqrt{1+|\delta|^2}}{-|\delta|+\sqrt{1+|\delta|^2}}.
\]
Now, using that $g_1= g_0/\beta $, we obtain
\[\max_{ \{v\in T_p M:\ F(v)=1     \}} f(v)=\frac{\Lambda(p) + \sqrt{1+ \Lambda^2(p)}}{-\Lambda(p)
+ \sqrt{1+\Lambda^2(p)}},\]
where $\Lambda(p)=|\delta|_0/\sqrt{\beta}$
and the equality also holds when $\delta(p)=0$, so that
\[
\lambda=\max_{x\in M} \frac{\Lambda(x) + \sqrt{1+
\Lambda^2(x)}}{-\Lambda(x) + \sqrt{1+\Lambda^2(x)}}.
\]
Moreover, as the real  function
\[f(x)=\frac{x+\sqrt{1+x^2}}{-x+\sqrt{1+x^2}}
\]
is increasing we finally conclude that
\[\lambda=\frac{\varphi+\sqrt{1+\varphi^2}}{-\varphi+\sqrt{1+\varphi^2}},
\]
where $\varphi=\max_{p\in M}|\delta|_0/\sqrt{\beta}$. Now we can translate the Rademacher's result to the Fermat language. The pinched coefficient $\eta$ in Rademacher's theorem is $\lambda/(1+\lambda)$ and the estimate on the length of geodesics loops is $\pi(\lambda+1)/\lambda$, so that putting all this in function of $\varphi$, we obtain the following result for spacetimes.
\begin{thm}\label{boundT}
Let $(M\times \R,l)$ be a stationary spacetime as in \eqref{stationarymetric} such that its Fermat metric (see \eqref{fermatmetric}) has flag curvature $K$ satisfying
 \[ \frac{\varphi+\sqrt{1+\varphi^2}}{2\sqrt{1+\varphi^2}}<K(p)<1\]for every $p\in M$. Then the period of a $t$-periodic light rays is at least \[\frac{2\pi\sqrt{1+\varphi^2}}{\varphi+\sqrt{1+\varphi^2}},\]
 where $\varphi=\max_{x\in M}|\delta|_0/\sqrt{\beta}$.
\end{thm}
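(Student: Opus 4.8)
The statement is obtained as a direct translation into the Fermat picture of Rademacher's lower bound for the length of geodesic loops on $\eta$-pinched Finsler manifolds, using the reversibility $\lambda$ of the Fermat metric computed just above; no new geometry is involved, and essentially all the work lies in matching the constants. First I would use Fermat's principle: the spacelike projection $x$ of a $t$-periodic light ray of $(M\times\R,l)$ is, up to reparametrization, a closed geodesic of the Fermat metric $F$ on the compact manifold $M$, and the period $T$ equals the Fermat length $L(x)$ of one turn of $x$. Since $L$ is reparametrization-invariant, it is irrelevant that $x$ is a priori only a closed \emph{pregeodesic}; and since a closed geodesic is in particular a geodesic loop of $(M,F)$, it suffices to bound from below the length of geodesic loops of $(M,F)$, the flag curvature $K$ in the statement being precisely the Finslerian flag curvature of $F$ to which Rademacher's theorem refers.

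Next I would invoke the value $\lambda=\frac{\varphi+\sqrt{1+\varphi^2}}{-\varphi+\sqrt{1+\varphi^2}}$ of the reversibility of $F$, with $\varphi=\max_{p\in M}|\delta|_0/\sqrt{\beta}$, established above, together with the two elementary identities $\frac{\lambda}{1+\lambda}=\frac{\varphi+\sqrt{1+\varphi^2}}{2\sqrt{1+\varphi^2}}$ and $\pi\,\frac{\lambda+1}{\lambda}=\frac{2\pi\sqrt{1+\varphi^2}}{\varphi+\sqrt{1+\varphi^2}}$. The first shows that the hypothesis $\frac{\varphi+\sqrt{1+\varphi^2}}{2\sqrt{1+\varphi^2}}<K(p)<1$ is exactly the $\eta$-pinching condition of Rademacher's theorem \cite{rade04} with $\eta=\lambda/(1+\lambda)$; that theorem then gives $L(x)\ge\pi(\lambda+1)/\lambda$, and the second identity rewrites this bound as $L(x)\ge\frac{2\pi\sqrt{1+\varphi^2}}{\varphi+\sqrt{1+\varphi^2}}$. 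Combining with $T=L(x)$ proves the theorem.

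There is no serious obstacle. The only points requiring care are the bookkeeping of the reduction --- that $T$ is literally the Finsler length of the associated closed Fermat geodesic and that this geodesic falls under Rademacher's hypotheses (closed geodesic $\Rightarrow$ geodesic loop) --- and the short but slightly delicate algebra matching Rademacher's pinching constant and his length estimate with the reversibility $\lambda$ written in terms of $\varphi$. I expect this last, purely computational, matching to be the step where an error is most likely to slip in.
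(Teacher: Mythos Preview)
Your proposal is correct and follows exactly the paper's own argument: use the Fermat principle to identify the period $T$ with the Fermat length of a closed geodesic, plug the computed reversibility $\lambda=\frac{\varphi+\sqrt{1+\varphi^2}}{-\varphi+\sqrt{1+\varphi^2}}$ into Rademacher's $\eta$-pinching bound, and rewrite $\eta=\lambda/(1+\lambda)$ and $\pi(\lambda+1)/\lambda$ in terms of $\varphi$. The paper presents this with the same level of detail, so there is nothing to add.
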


\section{Conclusions and further developments}
In this work we have shown how Finsler geometry is powerful to study $t$-periodic lightlike geodesics in standard stationary spacetimes. As a matter of fact, Fermat metrics coincide with Randers and Zermelo metrics that are the families between the Finslerian metrics atracting more attention in the last years. The geometry of these metrics is still to understand and every progress in the field can be used to improve the results in this work. It will be particularly interesting to obtain expressions of the flag curvature of Zermelo metrics as a function of the curvature of the Riemannian background and the derivatives of the ``wind'' $W$. Having this in hand, we could express the hypothesis of Theorem \ref{boundT} directly in terms of $g_0$, $\beta$ and $\delta$ as it would be desirable.
%\bibliographystyle{siam}
%\bibliography{mybib}
%\end{document}

\end{document}